\documentclass{amsart}
\usepackage{latexsym,amsmath,amssymb}
\usepackage{cite}
\usepackage{tikz}
\usepackage{caption}
\usepackage{tikz-3dplot}

\newtheorem{thm}{Theorem}[section]
\newtheorem{cor}[thm]{Corollary}
\newtheorem{exam}[thm]{Example}

\newtheorem{prop}[thm]{Proposition}
\theoremstyle{definition}\newtheorem{defn}[thm]{Definition}
\theoremstyle{remark}
\newtheorem{rem}[thm]{Remark}
\numberwithin{equation}{section}

\newcommand{\A}{\mathcal{A}}

\begin{document}

\title[]
{Weighted conditional expectation operators and nuclearity}

\author{\sc\bf M. S. Al Ghafri, S. Shamsigamchi and Y. Estaremi }
\address{\sc M. S. Al Ghafri, S. Shamsigamchi and Y. Estaremi}
\email{mohammed.alghafri@utas.edu.om}\email{S.shamsi@pnu.ac.ir}\email{y.estaremi@gu.ac.ir}
\address{Department of Mathematics, University of Technology and Applied Sciences, Rustaq {329},  Oman.}
\address{Department of Mathematics, Payame Noor(PNU) university, Tehran, Iran.}
\address{Department of Mathematics and Computer Sciences,
Golestan University, Gorgan, Iran.}

\thanks{}

\thanks{}

\subjclass[2020]{47B38}

\keywords{Weighted conditional type operator, Nuclear operator, Absolutely summing operator.}

\date{}

\dedicatory{}

\commby{}

\begin{abstract}
We provide a characterisations of nuclear weighted conditional expectation operators on $L^p(\mu)$-spaces, for $1\leq p<\infty$. As a consequence, when the underlying measure space is non-atomic, the only nuclear weighted conditional expectation operator on $L^p(\mu)$-spaces is the zero operator.
\end{abstract}

\maketitle

\section{ \sc\bf Introduction and Preliminaries}
Let $(X,\Sigma,\mu)$ be a complete $\sigma$-finite measure space.
For any sub-$\sigma$-finite algebra $\mathcal{A}\subseteq
 \Sigma$, the $L^p$-space
$L^p(X,\mathcal{A},\mu_{\mid_{\mathcal{A}}})$ is abbreviated  by
$L^p(\mathcal{A})$ and its norm is denoted by $\|.\|_p$, $(1\leq p\leq \infty)$. All
comparisons between two functions or two sets are to be
interpreted as holding up to a $\mu$-null set. The support of a
measurable function $f$ is defined as $S(f)=\{x\in X; f(x)\neq
0\}$. We denote the vector space of all equivalence classes of
almost everywhere finite valued measurable functions on $X$ by
$L^0(\Sigma)$.

\vspace*{0.3cm} For a $\sigma$-finite subalgebra
$\mathcal{A}\subseteq\Sigma$, the conditional expectation operator
associated with $\mathcal{A}$ is the mapping $f\rightarrow
E^{\mathcal{A}}f$, defined for all non-negative measurable
functions $f$ as well as for all $f\in L^p(\Sigma)$, $(1\leq p\leq \infty)$, where
$E^{\mathcal{A}}f$, by the Radon-Nikodym theorem, is the unique
$\mathcal{A}$-measurable function satisfying
$\int_{A}fd\mu=\int_{A}E^{\mathcal{A}}fd\mu, \ \ \ \forall A\in
\mathcal{A}$. As an operator on $L^{p}({\Sigma})$,
$E^{\mathcal{A}}$ is idempotent and
$E^{\mathcal{A}}(L^p(\Sigma))=L^p(\mathcal{A})$. This operator
will play a major role in our work. Let $f\in L^0(\Sigma)$, then
$f$ is said to be conditionable with respect to $E$ if
$f\in\mathcal{D}(E):=\{g\in L^0(\Sigma): E(|g|)\in
L^0(\mathcal{A})\}$. We
write $E(f)$ in place of $E^{\mathcal{A}}(f)$.\\
Here we recall some basic properties of the conditional expectation operator $E$ on Hilbert space $L^p(\mu)$. Let $f,g\in L^p(\mu)$. Then\\
\vspace*{0.2cm} \noindent $\bullet$ \  If $g$ is
$\mathcal{A}$-measurable, then $E(fg)=E(f)g$.

\noindent $\bullet$ \ $|E(f)|^p\leq E(|f|^p)$.

\noindent $\bullet$ \ If $f\geq 0$, then $E(f)\geq 0$; if $f>0$,
then $E(f)>0$.

\noindent $\bullet$ \ $|E(fg)|\leq
E(|f|^p)|^{\frac{1}{p}}E(|g|^{p'})|^{\frac{1}{p'}}$, where
$\frac{1}{p}+\frac{1}{p'}=1$ (H\"{o}lder inequality).

\noindent $\bullet$ \ For each $f\geq 0$, $S(f)\subseteq
S(E(f))$. \\

 A detailed discussion and verification of
most of these properties may be found in \cite{rao}. We recall that
an $\mathcal{A}$-atom of the measure $\mu$ is an element
$A\in\mathcal{A}$ with $\mu(A)>0$ such that for each
$F\in\mathcal{A}$, if $F\subseteq A$, then either $\mu(F)=0$ or
$\mu(F)=\mu(A)$. A measure space $(X,\Sigma,\mu)$ with no atoms is
called non-atomic measure space. It is well-known fact that every
$\sigma$-finite measure space $(X,
\mathcal{A},\mu_{\mid_{\mathcal{A}}})$ can be partitioned uniquely
as $X=\left (\bigcup_{n\in\mathbb{N}}A_n\right )\cup B$, where
$\{A_n\}_{n\in\mathbb{N}}$ is a countable collection of pairwise
disjoint $\mathcal{A}$-atoms and $B$, being disjoint from each
$A_n$, is non-atomic (see \cite{z}).

It is easy to see that if the Sigma subalgebra $\mathcal{A}$ is generated by a sequence of measurable sets $\{A_n\}_{n\in \mathbb{N}}$ with positive measures, then for every $f\in\mathcal{D}(E)$,
$$E(f)=\sum_{n=1}^{\infty}\left(\frac{1}{\mu(A_n)}\int_{A_n}fd\mu\right).\chi_{A_n}$$

\begin{defn}
Let $(X,\Sigma,\mu)$ be a $\sigma$-finite measure space and let $\mathcal{A}$ be a
$\sigma$-subalgebra of $\Sigma$, such that $(X,\mathcal{A},\mu)$ is also $\sigma$-finite. Suppose that $E$ is the corresponding conditional
expectation operator on $L^p(\mu)$ relative to $\mathcal{A}$. If $1\leq p\leq\infty$, $w,u \in L^0(\mu)$ such that $uf$ is conditionable, for all $f\in L^p(\mu)$, then the corresponding weighted conditional expectation(WCE) operator is the linear
transformation $M_wEM_u:L^p(\mu)\rightarrow L^0(\mu)$ defined by $f\rightarrow wE(uf)$.
\end{defn}
Interested readers can find more information about WCE operators in \cite{ej,her,dhd}.\\
Many operator-theoretic properties of WCE operators on $L^p$-spaces were studied in \cite{e2,e,e1,ej,her,dhd} and such operators acting between distinct $L^p$-spaces in \cite{ej, ej2}. In this paper we characterize nuclear WCE operators on $L^p(\mu)$-spaces, for $1\leq p<\infty$.

\section{ \sc\bf Main Results}
Let $X$ and $Y$ be Banach spaces and $X^*$ be the dual of $X$.
A bounded operator $T:X\to Y$ is \emph{nuclear} if it admits a representation
\begin{equation}\label{A}
Tx=\sum_{n=1}^{\infty} f_n(x)\,y_n,
\qquad x\in X,
\end{equation}
for some $(f_n)\subset X^*$ and $(y_n)\subset Y$ with
$\sum_{n=1}^{\infty}\|f_n\|_{X^*}\,\|y_n\|_{Y}<\infty$.
The \emph{nuclear norm} is
\[
\|T\|_{N}
:=\inf\left\{\sum_{n=1}^{\infty}\|f_n\|_{X^*}\,\|y_n\|_{Y}:\ T=\sum_{n=1}^{\infty} f_n\otimes y_n\right\},
\]
where $(f\otimes y)(x):=f(x)\,y$. We refer to \cite{pi,die} for background on nuclear and summing operators,
including the ideal properties and standard consequences used implicitly below.

\medskip
An operator $T:X\to Y$ is \emph{absolutely $1$--summing} if there exists $M>0$ such that for every finite family
$x_1,\dots,x_m\in X$,
\begin{equation}\label{eq:1summing}
\sum_{i=1}^{m}\|T(x_i)\|
\le M\,\sup_{f\in X^*,\,\|f\|\le 1}\sum_{i=1}^{m}|f(x_i)|.
\end{equation}
The least such $M$ is denoted by $\pi_1(T)$. Using the weak $\ell_1$--norm identification, \eqref{eq:1summing} is equivalent to
\[
\sum_{i=1}^{m}\|T(x_i)\|
\le M\,\sup_{|\alpha_i|=1}\left\|\sum_{i=1}^{m}\alpha_i x_i\right\|.
\]

In the following we recall the Pietsch's domination theorem that will be used in the proof of our main results.

\begin{thm}[Pietsch domination theorem {\cite[Theorem~2.12]{die}}]\label{thm:Pietsch}
Let $1\le p<\infty$ and let $T:X\to Y$ be absolutely $p$--summing.
Then there exist $C>0$ and a Borel probability measure $\mu$ on the weak$^{*}$--compact unit ball $B_{X^*}$ such that
\[
\|Tx\|
\le C\left(\int_{B_{X^*}} |x^*(x)|^p\,d\mu(x^*)\right)^{1/p},
\qquad x\in X.
\]
In particular, for $p=1$,
\[
\|Tx\|\le C\int_{B_{X^*}} |x^*(x)|\,d\mu(x^*).
\]
\end{thm}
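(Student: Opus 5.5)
The plan is the standard separation argument: use Hahn--Banach in $C(K)$, where $K=B_{X^*}$ carries the weak$^*$ topology, and then the Riesz representation theorem. Let $\pi_p(T)$ denote the least constant in the $p$--summing inequality
\[
\Bigl(\sum_{i}\|Tx_i\|^p\Bigr)^{1/p}\le \pi_p(T)\,\sup_{x^*\in K}\Bigl(\sum_i|x^*(x_i)|^p\Bigr)^{1/p}
\]
(for $p=1$ this is \eqref{eq:1summing}). If $T=0$ there is nothing to prove, so assume $\pi_p(T)>0$. Each function $x^*\mapsto |x^*(x)|^p$ is weak$^*$ continuous on the compact space $K$. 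Hence, for every finite family $x_1,\dots,x_m\in X$, the function
\[
g_{(x_i)}(x^*)=\sum_{i=1}^m\|Tx_i\|^p-\pi_p(T)^p\sum_{i=1}^m|x^*(x_i)|^p
\]
belongs to $C(K)$.

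Let $\mathcal{Q}_1$ be the set of all such $g_{(x_i)}$, and let $\mathcal{Q}_2=\{g\in C(K):\ g(x^*)>0\ \text{for all } x^*\in K\}$.

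\emph{$\mathcal{Q}_1$ is convex.} For $t\in[0,1]$, the function $t\,g_{(x_i)}+(1-t)\,g_{(y_j)}$ equals $g$ of the concatenated family $(t^{1/p}x_i)\cup((1-t)^{1/p}y_j)$. This is exactly where the homogeneity of order $p$ is used.

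\emph{$\mathcal{Q}_2$ is open and convex.} This is immediate from compactness of $K$.

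\emph{The two sets are disjoint.} By the summing inequality, each $g_{(x_i)}$ satisfies $\min_K g_{(x_i)}\le 0$. This is the step where the hypothesis on $T$ enters, and it is the step I expect to need the most care: one has to check that the supremum over $K$ in the summing inequality is actually attained, so that the inequality gives a point where $g\le 0$ and not merely $\inf g\le 0$. Attainment follows from weak$^*$ compactness and continuity.

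By Hahn--Banach there are a nonzero $\varphi\in C(K)^*$ and $c\in\mathbb{R}$ with
\[
\varphi(g)\le c<\varphi(h)\qquad\text{for all } g\in\mathcal{Q}_1,\ h\in\mathcal{Q}_2.
\]
Since $0\in\mathcal{Q}_1$ (take the family $x_1=0$), we have $c\ge 0$. Since $\mathcal{Q}_2$ is a cone whose closure contains $0$, we get $c\le 0$, so $c=0$. Thus $\varphi>0$ on strictly positive functions, and by density $\varphi\ge 0$ on all nonnegative functions. By the Riesz representation theorem, $\varphi$ is integration against a positive Borel measure. After normalizing we obtain a probability measure $\mu$ on $K$ with $\int_K g\,d\mu\le 0$ for every $g\in\mathcal{Q}_1$.

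Applying this to a one-element family $\{x\}$ gives
\[
\|Tx\|^p\le \pi_p(T)^p\int_{B_{X^*}}|x^*(x)|^p\,d\mu(x^*),
\]
which is the assertion with $C=\pi_p(T)$. Setting $p=1$ gives the stated special case.
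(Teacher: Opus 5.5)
Your proof is correct and is the standard one: the paper gives no proof of its own but quotes the result from \cite[Theorem~2.12]{die}, where it is proved by this same Hahn--Banach separation of the convex cone of functions $x^*\mapsto\sum_i\|Tx_i\|^p-\pi_p(T)^p\sum_i|x^*(x_i)|^p$ from the open cone of strictly positive functions in $C(B_{X^*})$, followed by the Riesz representation theorem and normalization. Your individual checks are all sound: convexity via the rescaled concatenated family, $c=0$ from the two cone properties, and positivity of $\varphi$ with $\varphi(1)>0$.
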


As we have proved in \cite{ej}, the WCE operator $T=M_wEM_u$ is a bounded operator on $L^{p}(\Sigma)$ if and only if
$(E|w|^{p})^{\frac{1}{p}}(E|u|^{p'})^{\frac{1}{p'}} \in
L^{\infty}(\mathcal{A})$,
 and in this case its norm is as follows
$$\|T\|=\|(E(|w|^{p}))^{\frac{1}{p}}(E(|u|^{p'}))^{\frac{1}{p'}}\|_{\infty},$$
in which $1<p<\infty$ and $\frac{1}{p}+\frac{1}{p'}=1$.\\
Also, the WCE operator $T$ is bounded on $L^{1}(\Sigma)$ if and only if
$uE(|w|) \in L^{\infty}(\Sigma)$ and in this case
$\|T\|=\|uE(|w|)\|_{\infty}$.\\

Moreover, in \cite{ej} it is shown that for $1<p<\infty$, WCE operator $M_wEM_u:L^{p}(\Sigma)\rightarrow
L^{p}(\Sigma)$ is compact if and only if for each $\varepsilon>0$
the set
 $$N_{\varepsilon}=\{x\in
X:(E(|u|^{p'}))^{\frac{1}{p'}}(x)(E(|w|^{p}))^{\frac{1}{p}}(x)\geq\varepsilon\},$$
consists of finitely many $\mathcal{A}$-atoms. And also, it is proved that the WCE operator $T=M_wEM_u:L^{1}(\Sigma)\rightarrow L^{1}(\Sigma)$ is
compact if and only if for each $\varepsilon>0$ the set
 $$K_\varepsilon:=\{x\in X:u(x)E(|w|)(x)\geq\varepsilon\}$$
 consists of finitely many
$\Sigma$-atoms.

\begin{cor}\label{c2.2} Let $1\leq p<\infty$. If the
measure space $(X,\mathcal{A},\mu)$ is non-atomic, then the WCE operator $T=M_wEM_u:L^{p}(\Sigma)\rightarrow L^{p}(\Sigma)$ is
nuclear if and only if  $T=0$.
\end{cor}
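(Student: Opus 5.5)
Since the zero operator is trivially nuclear, the content is the forward direction: if $T=M_wEM_u$ is nuclear on $L^p(\Sigma)$ and $(X,\mathcal{A},\mu)$ is non-atomic, then $T=0$. The plan is to pass from nuclearity to compactness, which is the standard consequence of the nuclear representation \eqref{A}, and then apply the compactness characterisations from \cite{ej} recalled above. Indeed, if $T=\sum_n f_n\otimes y_n$ with $\sum_n\|f_n\|\,\|y_n\|<\infty$, the finite-rank partial sums converge to $T$ in operator norm, so $T$ is compact.

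For $1<p<\infty$, compactness gives that for every $\varepsilon>0$ the set $N_\varepsilon=\{x:(E|u|^{p'})^{1/p'}(E|w|^{p})^{1/p}\geq\varepsilon\}$ consists of finitely many $\mathcal{A}$-atoms. Note that $N_\varepsilon\in\mathcal{A}$, since the defining function is $\mathcal{A}$-measurable. Because $\mathcal{A}$ has no atoms, $N_\varepsilon$ must be $\mu$-null. Letting $\varepsilon=1/k\to0$, we get $(E|u|^{p'})(E|w|^p)=0$ a.e. It then remains to show this forces $T=0$. By the conditional Hölder inequality, $|wE(uf)|\le |w|\,(E|u|^{p'})^{1/p'}(E|f|^p)^{1/p}$. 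On the set $S(E|u|^{p'})$ we have $E|w|^p=0$, hence $w=0$ there, because $S(|w|^p)\subseteq S(E|w|^p)$. Off that set, $E|u|^{p'}=0$ gives $E(uf)=0$. Therefore $Tf=0$ for all $f$.

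For $p=1$, the characterisation uses $K_\varepsilon=\{u\,E|w|\geq\varepsilon\}$, which must consist of finitely many $\Sigma$-atoms. The main obstacle is here: the non-atomicity hypothesis is on $\mathcal{A}$, whereas $K_\varepsilon$ need not lie in $\mathcal{A}$, and a $\Sigma$-atom is not directly excluded. I would first try the following step. If $A$ is a $\Sigma$-atom, then every $\mathcal{A}$-measurable function is a.e. constant on $A$. Since $E|w|>0$ on $A$ (as $u\neq0$ there), the set $S(E|w|)$ meets $A$. Taking $F\in\mathcal{A}$ with $A\subseteq F$ up to null sets, the measure $\mu|_{\mathcal{A}}$ restricted to subsets of $F$ can be split via non-atomicity into pieces of small measure. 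Comparing $\int_F E(\chi_A)\,d\mu=\mu(A)$ should then force a contradiction.

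If that fails, I would instead argue directly on the nuclear structure. $T$ annihilates functions $f$ with $E(uf)=0$, and its range is contained in $w\cdot L^0(\mathcal{A})$. So $T$ factors as $M_w\circ J\circ E M_u$, where $J$ acts on an $L^1(\mathcal{A})$-type space over a non-atomic algebra. Nuclear, hence compact, operators restricted to such a diffuse band must vanish, since $L^1$ of a non-atomic measure contains, on any set of positive measure, normalized disjointly supported sequences that $T$ maps to a non-null disjoint sequence with no convergent subsequence. This gives $u\,E|w|=0$ a.e. and hence, as above, $T=0$.
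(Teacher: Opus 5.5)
The case $1<p<\infty$ is correct. It differs from the paper only in that you invoke the recalled compactness criterion as a black box, whereas the paper proves the needed direction directly. It takes disjoint $B_n\in\mathcal{A}$ inside $\{(E(|w|^p))^{1/p}(E(|u|^{p'}))^{1/p'}>\delta\}$ and explicit unit vectors $f_n$ on $B_n$ with $\|Tf_n-Tf_m\|_p$ bounded below.

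The case $p=1$, however, is not proved. Your first route feeds into the recalled $L^1$ criterion with $\Sigma$-atoms, and that criterion is false as stated. Take $[0,1]$ with Lebesgue measure, $\mathcal{A}=\{\emptyset,X\}$ and $u=w=1$. Then $Tf=(\int_0^1 f\,d\mu)\cdot 1$ has rank one, yet $K_\varepsilon=X$ for $\varepsilon\le 1$, and $\Sigma$ has no atoms at all. So showing that $\Sigma$ is non-atomic does not close the argument. That sub-claim is true, and your sketch can be finished: choose $F\in\mathcal{A}$ with $\mu(F)<\infty$ and $\mu(A\cap F)=\mu(A)$, then cut $F$ into finitely many $\mathcal{A}$-sets of measure $<\mu(A)$.

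Your second route names the right mechanism but omits the decisive step. Read as a general principle ("compact operators on a diffuse $L^1$ vanish"), it is false, again because of rank-one operators. What makes it work here is that $T(\chi_F f)=\chi_F\,Tf$ for $F\in\mathcal{A}$, together with test functions whose images have norm bounded below. You leave exactly that choice unspecified; it can be made as follows.

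Since $\|T\|=\|uE(|w|)\|_\infty>0$, some $G=\{|u|E(|w|)>\delta\}$ has positive measure. As $G\subseteq S(E(\chi_G))\in\mathcal{A}$, non-atomicity and $\sigma$-finiteness give pairwise disjoint $B_n\in\mathcal{A}$ with $B_n\subseteq S(E(\chi_G))$ and $0<\mu(B_n)<\infty$. Then $C_n=G\cap B_n$ satisfies $\mu(C_n)=\int_{B_n}E(\chi_G)\,d\mu>0$. Put $f_n=\frac{\overline{u}}{|u|}\cdot\frac{\chi_{C_n}}{\mu(C_n)}$. Then $\|f_n\|_1=1$, $S(Tf_n)\subseteq B_n$, and
\[
\|Tf_n\|_1=\int_X E(|w|)\,E(uf_n)\,d\mu=\frac{1}{\mu(C_n)}\int_{C_n}|u|\,E(|w|)\,d\mu\ge\delta .
\]
Hence $\|Tf_n-Tf_m\|_1\ge 2\delta$ for $n\ne m$, contradicting compactness.

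Do not copy the paper's $p=1$ test function $\overline{E(u)}E(|w|)\chi_{B_n}/((\|T\|^2+1)\mu(B_n))$. It only controls $|E(u)|E(|w|)$, which can vanish identically (for instance when $E(u)=0$) although $T\neq 0$. Meanwhile the paper's $\delta$ only bounds $E(|u|)E(|w|)$ from below. Using $u$ itself, as above, is what produces the lower bound.
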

\begin{proof} Let $T=M_wEM_u$ be a nonzero
nuclear WCE operator. Then $T$ is compact and for the case $1<p<\infty$,
$$0\neq\|T\|=\|(E(|w|^{p}))^{\frac{1}{p}}(E(|u|^{p'}))^{\frac{1}{p'}}\|_{\infty},$$
and so there exists $\delta>0$ such that the set
$$\{x\in
X:(E(|w|^{p}))^{\frac{1}{p}}(x)(E(|u|^{p'}))^{\frac{1}{p'}}(x)>\delta\},$$
contains an $\mathcal{A}$-measurable subset $B$ with positive
measure. Hence, we obtain a sequence of pairwise disjoint subsets
$B_{n}\subseteq B$ such that for every $n\in\mathbb{N}$,
$B_{n}\in\A$ and $0<\mu(B_{n})<\infty$. Define
$$f_{n}=\frac{\overline{u}|u|^{p'-2}(E(|w|^{p}))^{\frac{p'-1}{p}}}
{\|T\|^{\frac{p'}{p}} (\mu(B_{n}))^{\frac{1}{p}}}\chi_{B_{n}}.$$
It is easy to see that $\|f_{n}\|_{p}\leq1$ and
\begin{align*}
\|Tf_{n}-Tf_{m}\|^{p}_{p}&=\int_{X}|w|^{p}|E(f_{n}-f_{m})|^{p}d\mu\\
&\geq\int_{B_{n}}\frac{(E(|w|^{p}))^{p'}(E(|u|^{p'}))^{p}}{\|T\|^{p'}\mu(B_{n})}d\mu\\
&+ \int_{B_{m}}\frac{(E(|w|^{p}))^{p'}(E(|u|^{p'}))^{p}}{\|T\|^{p'}\mu(B_{m})}d\mu\\
 &\geq \frac{2\delta^{pp'}} {\|T\|^{p'}}.
 \end{align*}
 Also, for the case $p=1$, we have
 $0\neq\|T\|=\|uE(|w|)\|_{\infty}$ and so $\mu(S(uE(|w|))>0$. Also, we have $S(uE(|w|)\subseteq S(E(|u|)E(|w|)$ and $\|E(|u|)E(|w|)\|_{\infty}\leq \|uE(|w|)\|_{\infty}<\infty$. Hence $\mu(S(E(|u|)E(|w|))>0$ and then there exists $\delta>0$ such that the set
$$\{x\in
X:E(|u|)(x)E(|w|)(x)>\delta\},$$
contains an $\mathcal{A}$-measurable subset $B$ with positive
measure. Hence, we obtain a sequence of pairwise disjoint subsets
$B_{n}\subseteq B$ such that for every $n\in\mathbb{N}$,
$B_{n}\in\A$ and $0<\mu(B_{n})<\infty$. Define
$$f_{n}=\frac{\overline{E(u)}E(|w|)}
{(\|T\|^2+1)\mu(B_{n})}\chi_{B_{n}}.$$
It is easy to see that $\|f_{n}\|_{1}\leq1$ and $\{Tf_n\}_{n\in \mathbb{N}}$ has no convergent subsequence.

  This implies that in both cases, $T$ is not compact. But this is a contradiction. The converse is obvious.\\
\end{proof}

\begin{cor}\label{c2.4}
If the measure space $(X, \mathcal{A}, \mu)$, with finite subset property, is not purely atomic, such that the restriction of the WCE operator $T=M_wEM_u$ to the non-atomic part is a non-zero operator, then $T$ can not be nuclear on $L^p(\Sigma)$ for $1\leq p<\infty$.
\end{cor}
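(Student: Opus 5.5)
We argue by contradiction, following Corollary \ref{c2.2}: assume $T=M_wEM_u$ is nuclear on $L^p(\Sigma)$. Use the decomposition $X=\left(\bigcup_{n}A_n\right)\cup B$ of $(X,\mathcal{A},\mu)$, with $\{A_n\}$ pairwise disjoint $\mathcal{A}$-atoms and $B\in\mathcal{A}$ non-atomic. By hypothesis $\mu(B)>0$, and the restriction of $T$ to the non-atomic part, $T_B:=M_{\chi_B}T M_{\chi_B}$, is nonzero.

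\textbf{Step 1: $T_B$ is nuclear and is again a WCE operator.} Since $B$ is $\mathcal{A}$-measurable, $\chi_B$ commutes with $E$, so
\[
T_B f=\chi_B wE(u\chi_B f)=(\chi_B w)E\big((\chi_B u) f\big).
\]
Thus $T_B=M_{w\chi_B}EM_{u\chi_B}$ is a WCE operator. Moreover $T_B=P_BTP_B$ with $P_B=M_{\chi_B}$ a norm-one projection on $L^p(\Sigma)$. By the ideal property of nuclear operators, $T_B$ is nuclear with $\|T_B\|_N\le\|T\|_N$.

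\textbf{Step 2: pass to the space over $B$.} View $T_B$ as an operator on $L^p(B,\Sigma_B,\mu|_B)$, where $\Sigma_B=\{F\cap B:F\in\Sigma\}$ and $\mathcal{A}_B=\{A\cap B:A\in\mathcal{A}\}$. The inclusion $L^p(B)\hookrightarrow L^p(X)$ and the restriction $L^p(X)\to L^p(B)$ are bounded, so the compressed operator is still nuclear. It is the WCE operator on $L^p(B)$ with weights $w|_B$, $u|_B$ and conditional expectation $E^{\mathcal{A}_B}$, which agrees with $E$ on functions supported in $B$.

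The space $(B,\mathcal{A}_B,\mu)$ is non-atomic and $\sigma$-finite. Corollary \ref{c2.2} then forces $T_B=0$, contradicting the hypothesis.

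\textbf{Main obstacle.} The point needing care is the identification in Step 2: that conditional expectation relative to $\mathcal{A}_B$ on $B$ coincides with $E$ restricted to functions supported in $B$. This follows because for $A\in\mathcal{A}$, $\int_{A\cap B}f\,d\mu=\int_{A\cap B}E(f)\,d\mu$, and $\chi_BE(f)=E(\chi_Bf)$ is $\mathcal{A}_B$-measurable.

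Should that identification be inconvenient, one can instead rerun the proof of Corollary \ref{c2.2} directly inside $B$. Since $T_B\neq 0$, the relevant quantity, $(E|w|^p)^{1/p}(E|u|^{p'})^{1/p'}$ or $E(|u|)E(|w|)$, exceeds some $\delta>0$ on an $\mathcal{A}$-measurable set of positive measure contained in $B$. Non-atomicity of $B$, together with the finite subset property, yields disjoint sets $B_n$ there with $0<\mu(B_n)<\infty$. The same test functions $f_n$ then show that $T$ is not compact, which is impossible for a nuclear operator.
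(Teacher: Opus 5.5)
Your proof is correct, but your main route differs from the paper's. The paper argues directly, as in your fallback. Inside a non-atomic $\mathcal{A}$-set of positive measure it builds, exactly as in Corollary \ref{c2.2}, a bounded sequence $\{f_n\}$ for which $\{Tf_n\}$ has no convergent subsequence. So $T$ is not compact, hence not nuclear. You instead localize and reduce. Because $B\in\mathcal{A}$, $M_{\chi_B}$ commutes with $E$, so $M_{\chi_B}TM_{\chi_B}=M_{w\chi_B}EM_{u\chi_B}$ is nuclear by the ideal property. You then verify, correctly, that $E^{\mathcal{A}_B}g=E(g)$ for $g$ supported in $B$. Hence the compression is a WCE operator over the non-atomic $\sigma$-finite space $(B,\mathcal{A}_B,\mu)$, and Corollary \ref{c2.2} applies as a black box.

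Your reduction buys three things:
\begin{itemize}
\item a proof with no new computations;
\item it shows the finite subset property is superfluous, since $\sigma$-finiteness of $\mathcal{A}$ already supplies the sets $B_n$;
\item it makes visible exactly where the hypothesis that $T$ is nonzero on the non-atomic part is used. The paper's sketch only invokes the existence of a non-atomic set of positive measure, which by itself does not suffice.
\end{itemize}
The direct argument, on the other hand, avoids changing the underlying space.

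One caution about your fallback for $p=1$. The test functions of Corollary \ref{c2.2} are built from $\overline{E(u)}$, so $E(uf_n)$ is a multiple of $|E(u)|^2E(|w|)\chi_{B_n}$. This can vanish identically even though $E(|u|)E(|w|)>\delta$ on $B_n$. For instance, take $X=[0,1]^2$ with $\mathcal{A}$ consisting of the sets $A\times[0,1]$, $w=1$ and $u=\chi_{[0,1]\times(\frac12,1]}-\chi_{[0,1]\times[0,\frac12)}$. Then $E(u)=0$, yet $Tu=1$, so $T\neq 0$.

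Use instead $f_n=\frac{\overline{u}}{|u|}\chi_{S(u)\cap B_n}/\mu(B_n)$. Then $\|f_n\|_1\le 1$ and $Tf_n=w\,E(|u|)\chi_{B_n}/\mu(B_n)$, so
\[
\|Tf_n\|_1=\mu(B_n)^{-1}\int_{B_n}E(|w|)E(|u|)\,d\mu\ge\delta,
\]
and the $Tf_n$ have disjoint supports. The same repair is needed in the $p=1$ case of Corollary \ref{c2.2} itself, on which your main route rests; its conclusion is unaffected.
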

\begin{proof}
Similar to the proof of Corollary \ref{c2.2}, if there is a non-atomic measurable set $A\in \mathcal{A}$, with $\mu(A)>0$, then we can find a bounded sequence $\{f_n\}$ in
$L^p(\mathcal{A})$ such that the sequence $\{Tf_n\}$ has no convergent subsequence. So $T$ is not compact and therefore $T$ is not nuclear on $L^p(\Sigma)$.
\end{proof}

\begin{prop}\label{p2.5}
Let $1<p<\infty$ and $\mathcal{A}\subseteq \Sigma$ be a $\sigma$- finite subalgebra such that the measure space $(X, \mathcal{A}, \mu\mid_{\mathcal{A}})$ is a purely atomic measure space. Then the bounded WCE operator $T=M_wEM_u$ is nuclear on $L^p(\Sigma)$ if and only if
$$\sum^{\infty}_{n=1}(E(|w|^{p})(A_n))^{\frac{1}{p}}(E(|u|^{q})(A_n))^{\frac{1}{q}}<\infty,$$
in which $\{A_n\}^{\infty}_{n=1}$ pair-wise disjoint $\mathcal{A}$-atoms such that $X=\cup^{\infty}_{n=0}A_n$ and $\frac{1}{p}+\frac{1}{q}=1$.
\end{prop}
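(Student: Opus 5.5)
On a purely atomic $\mathcal{A}$ the operator splits into rank-one pieces along the atoms. Each $\mathcal{A}$-measurable function is constant on each atom, so for $f\in L^p(\Sigma)$
$$Tf=\sum_{n}wE(uf)\chi_{A_n}=\sum_{n}\frac{1}{\mu(A_n)}\Big(\int_{A_n}uf\,d\mu\Big)\,w\chi_{A_n},$$
by the formula for $E$ on an algebra generated by a countable partition. Set $\varphi_n(f)=\frac{1}{\mu(A_n)}\int_{A_n}uf\,d\mu$ and $y_n=w\chi_{A_n}$, so that $T=\sum_n\varphi_n\otimes y_n$. A direct computation gives
$$\|\varphi_n\|_{(L^p)^*}=\frac{\|u\chi_{A_n}\|_q}{\mu(A_n)},\qquad \|y_n\|_p=\|w\chi_{A_n}\|_p .$$
Moreover $\|u\chi_{A_n}\|_q=(E(|u|^q)(A_n))^{1/q}\mu(A_n)^{1/q}$ and similarly for $w$. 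Since $\frac1p+\frac1q=1$, this yields
$$\|\varphi_n\|\,\|y_n\|=(E(|w|^p)(A_n))^{1/p}(E(|u|^q)(A_n))^{1/q}.$$

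\emph{Sufficiency.} If the series in the statement converges, the representation above is a nuclear representation. The partial sums converge in operator norm, and on each $f$ they agree with $Tf$ pointwise on every atom. Hence $T$ is nuclear, with $\|T\|_N$ bounded by the sum.

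\emph{Necessity.} This is the main obstacle, since an arbitrary nuclear representation of $T$ need not respect the atoms. My plan is to use a lower bound $\|T\|_N\ge\sum_n\|P_nTP_n\|$, where $P_n=M_{\chi_{A_n}}$, together with the fact that $P_nTP_n=\varphi_n\otimes y_n$ is rank one, so its nuclear norm equals its operator norm $\|\varphi_n\|\|y_n\|$.

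To get the lower bound I would use trace duality. For finitely many atoms $n\le N$, choose norm-one $g_n\in L^p(A_n)$ and $h_n\in L^q(A_n)$ with $\langle h_n,Tg_n\rangle=\|\varphi_n\|\|y_n\|$; these are possible choices because $\|P_nTP_n\|=\|\varphi_n\|\|y_n\|$. Define
$$S=\sum_{n\le N}g_n\otimes h_n:L^p\to L^p,\qquad Sx=\sum_{n\le N}\langle h_n,x\rangle g_n .$$
Because the supports are disjoint, $\|S\|\le1$: one applies Hölder on each atom and then in $\ell^p$. Then $\operatorname{tr}(ST)=\sum_{n\le N}\langle h_n,Tg_n\rangle$. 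For any nuclear representation $T=\sum_k f_k\otimes z_k$ we have $|\operatorname{tr}(ST)|\le\sum_k\|f_k\|\|Sz_k\|\le\sum_k\|f_k\|\|z_k\|$. The trace here is computed from the representation as $\sum_k f_k(Sz_k)$, and this is well defined because $ST$ has finite rank.

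Letting $N\to\infty$ and taking the infimum over representations gives $\sum_n(E(|w|^p)(A_n))^{1/p}(E(|u|^q)(A_n))^{1/q}\le\|T\|_N<\infty$. The one point to check carefully is that $\sum_k f_k(Sz_k)$ really equals $\sum_n\langle h_n,Tg_n\rangle$. This holds because $S$ has finite rank, so the sum is a finite combination of evaluations of $T$.
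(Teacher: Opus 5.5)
Your proof is correct. The sufficiency half matches the paper's: both of you split $T$ along the atoms into rank-one pieces $\varphi_n\otimes y_n$, and in both cases $\|\varphi_n\|\,\|y_n\|$ is exactly the $n$-th term of the series. The paper places the factor $1/\mu(A_n)$ on $w\chi_{A_n}$ and misstates the individual norms, but the products agree.

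For necessity you take a genuinely different route. You use trace duality against the finite-rank contraction $S=\sum_{n\le N}g_n\otimes h_n$, built from norming elements on each atom. This argument uses an arbitrary nuclear representation of $T$ in full. Combined with sufficiency, it gives the sharp identity $\|T\|_N=\sum_n(E(|w|^p)(A_n))^{1/p}(E(|u|^q)(A_n))^{1/q}$. Your verification that $\sum_k f_k(Sz_k)=\sum_{n\le N}\langle h_n,Tg_n\rangle$ interchanges a finite sum with an absolutely convergent one. That is exactly what is needed, and it requires no appeal to the approximation property.

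The paper instead passes through Pietsch's domination theorem, so it only uses the fact that $T$ is absolutely $1$-summing. For $p>1$ that information is too weak. Take $X=\mathbb{N}$ with counting measure, $\mathcal{A}=\Sigma=2^{\mathbb{N}}$, $u\equiv 1$, $w(n)=1/n$ and $p=2$. Then $T=\mathrm{diag}(1/n)$ factors as the bounded map $\ell^2\to\ell^1$, $x\mapsto(x_n/n)$, followed by the inclusion $\ell^1\hookrightarrow\ell^2$. By Grothendieck's theorem $T$ is therefore absolutely summing, yet the series $\sum_n 1/n$ diverges.

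Correspondingly, two steps in the paper's proof fail for $q>1$:
\begin{itemize}
\item the identity $\|g_l\|_q=\sum_i\|g_l\chi_{A_i}\|_q$ is false;
\item the inequality $\sum_i a_i^{1/q}\le\bigl(\sum_i a_i\bigr)^{1/q}$ goes the wrong way.
\end{itemize}
So your trace-duality argument is not merely an alternative. It is what actually makes the necessity direction go through.
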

\begin{proof} Since the measure space $(X, \mathcal{A}, \mu\mid_{\mathcal{A}})$ is a $\sigma$-finite and purely atomic, then there exists a sequence of pair-wise disjoint $\mathcal{A}$-atoms $\{A_n\}^{\infty}_{n=1}$ such that $X=\cup^{\infty}_{n=0}A_n$. As is known in the literature the $\mathcal{A}$-measurable functions defined on $X$ are fixed on each $\mathcal{A}$-atom. Hence for every $f\in L^p(\Sigma)$ we have
\begin{align*}
Tf=w.E(uf)&=w.\sum_{n=1}^{\infty}E(u.f)(A_n).\chi_{A_n}\\
&=w.\sum_{n=1}^{\infty}\left(\frac{1}{\mu(A_n)}\int_{A_n}E(uf)d\mu\right).\chi_{A_n}\\
&=w.\sum_{n=1}^{\infty}\left(\frac{1}{\mu(A_n)}\int_{A_n}ufd\mu\right).\chi_{A_n}.
\end{align*}
Let $$\sum^{\infty}_{n=1}(E(|w|^{p})(A_n))^{\frac{1}{p}}(E(|u|^{q})(A_n))^{\frac{1}{q}}<\infty.$$
 Since $T$ is bounded and the measure space $(X, \mathcal{A}, \mu\mid_{\mathcal{A}})$ is a purely atomic, then we have
$$\|T\|=\|(E(|w|^{p}))^{\frac{1}{p}}(E(|u|^{q}))^{\frac{1}{q}}\|_{\infty}=\sup_{n}(E(|w|^{p})(A_n))^{\frac{1}{p}}(E(|u|^{q})(A_n))^{\frac{1}{q}}<\infty.$$
If we set $\phi_n(f)=\int_X(u.\chi_{A_n})fd\mu$ and $g_n=\frac{w.\chi_{A_n}}{\mu(A_n)}$, then we get that for every $n\in \mathbb{N}$, $u.\chi_{A_n}\in L^{q}(\Sigma)$ and  $g_n\in L^{p}(\Sigma)$. It is clear that for each $n\in \mathbb{N}$, the function $\phi_n$ is a bounded linear functional on $L^p(\Sigma)$. Also, $\|g_n\|_p=(E(|w|^p)(A_n))^{\frac{1}{p}}$ and $\|\phi_n\|=(E(|u|^{q})(A_n))^{\frac{1}{q}}$. Therefore,

$$T=M_wEM_u=\sum_{n=1}^{\infty}\phi_n\otimes g_n,$$
and
$$\sum_{n=1}^{\infty}\|\phi_n\|\|g_n\|=\sum^{\infty}_{n=1}(E(|w|^{p})(A_n))^{\frac{1}{p}}(E(|u|^{q})(A_n))^{\frac{1}{q}}<\infty.$$
This implies that $T$ is nuclear.\\

Conversely, let the WCE operator $T=M_wEM_u$ be nuclear. Then $T$ is absolutely summing and so by Pietsch’s  theorem 2.12 of \cite{die}, there is a Borel probability measure $\nu$ on the unit ball $B_q$ of $L^q(\mu)\simeq(L^p(\mu))^*$ such that for some $M>0$

$$\|Tf\|_p\leq M\int_{B_q}|l(f)|d\nu(l), \ \ \ \ \ \forall f\in L^p(\mu).$$
By Riesz representation theorem, for each $l\in (L^p(\mu))^*$, there exists $g_l\in L^q(\mu)$ such that
  $$l(f)=\int_Xf.g_l d\mu, \ \ \ \ \ f\in L^p(\mu).$$
  
So by Holder's inequality we have 
\begin{align*}
\|Tf\|_p&\leq M\int_{B_q}|l(f)|d\nu(l)\\
&=M\int_{B_q}\int_X|f.g_l| d\mu d\nu(g_l)\\
&\leq M\int_{B_q}\|g_l\|_q\|f\|_pd\nu(g_l),
\end{align*}
 
Since $\{A_n\}^{\infty}_{n=1}$ is a sequence of disjoint atoms, for each $i\in \mathbb{N}$ we define
$$f_i=\frac{\overline{u}|u|^{q-2}(E(|w|^{p}))^{\frac{q-1}{p}}}{\|T\|^{\frac{q}{p}} (\mu(A_{i}))^{\frac{1}{p}}}\chi_{A_{i}}.$$
It is easy to see that $f_i\in L^p(\Sigma)$ and $\|f_i\|_p\leq 1$. Hence we have
\begin{align*}
\|Tf_i\|^p_p&=\frac{1}{\|T\|^{q}}\int_{A_i}\frac{1}{\mu(A_{i})}\left((E(|w|^{p}))^{\frac{1}{p}}(E(|u|^{q}))^{\frac{1}{q}}\right)^{pq}d\mu\\
&=\frac{1}{\|T\|^{q}}\left((E(|w|^{p})(A_i))^{\frac{1}{p}}(E(|u|^{q})(A_i))^{\frac{1}{q}}\right)^{pq}.
\end{align*}
By these observations we get 
\begin{align*}
\|Tf_i\|_p&=\frac{1}{\|T\|^{\frac{q}{p}}}\left((E(|w|^{p})(A_i))^{\frac{1}{p}}(E(|u|^{q})(A_i))^{\frac{1}{q}}\right)^{q}\\
&\leq M\int_{B_q}\|g_l.\chi_{A_i}\|_q\|f_i\|_pd\nu(g_l)\\
&\leq  M\int_{B_q}\|g_l.\chi_{A_i}\|_qd\nu(g_l).\\
\end{align*}
And so 
$$(E(|w|^{p})(A_i))^{\frac{1}{p}}(E(|u|^{q})(A_i))^{\frac{1}{q}}\leq M^{\frac{1}{q}}\|T\|^{\frac{1}{p}}\left(\int_{B_q}\|g_l.\chi_{A_i}\|_qd\nu(g_l)\right)^{\frac{1}{q}}.$$

It is clear that $\|g_l\|_q=\sum^{\infty}_{i=1}\|g_l.\chi_{A_i}\|_q$. Therefore
\begin{align*}
\sum^{\infty}_{i=1}(E(|w|^{p})(A_i))^{\frac{1}{p}}(E(|u|^{q})(A_i))^{\frac{1}{q}}&\leq M^{\frac{1}{q}}\|T\|^{\frac{1}{p}}\sum^{\infty}_{i=1}\left(\int_{B_q}\|g_l.\chi_{A_i}\|_qd\nu(g_l)\right)^{\frac{1}{q}}\\
&\leq M^{\frac{1}{q}}\|T\|^{\frac{1}{p}}\left(\int_{B_q}\sum^{\infty}_{i=1}\|g_l.\chi_{A_i}\|_qd\nu(g_l)\right)^{\frac{1}{q}}\\
&=M^{\frac{1}{q}}\|T\|^{\frac{1}{p}}\left(\int_{B_q}\|g_l\|_qd\nu(g_l)\right)^{\frac{1}{q}}\\
&\leq M^{\frac{1}{q}}\|T\|^{\frac{1}{p}}.
\end{align*}
This implies that 

$$\sum^{\infty}_{i=1}(E(|w|^{p})(A_i))^{\frac{1}{p}}(E(|u|^{q})(A_i))^{\frac{1}{q}}\leq M^{\frac{1}{q}}\|T\|^{\frac{1}{p}}<\infty.$$

This completes the proof.
\end{proof}

In the following theorem we characterize compact WCE operators on $L^1(\Sigma)$.
\begin{thm}
The WCE operator $T:L^{1}(\Sigma)\rightarrow L^{1}(\Sigma)$ is
compact if and only if for each $\varepsilon>0$ the set
$$K_\varepsilon:=\{x\in X:E(u)(x)E(|w|)(x)\geq\varepsilon\}$$
consists of finitely many $\mathcal{A}$-atoms.
\end{thm}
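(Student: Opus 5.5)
The plan is to prove the two directions separately. Throughout I use that multiplication by an $\mathcal{A}$-measurable function commutes with $E$. I also use the decomposition of $(X,\mathcal{A},\mu|_{\mathcal{A}})$ into countably many $\mathcal{A}$-atoms and a non-atomic part $B$. Note that $K_\varepsilon$ is $\mathcal{A}$-measurable, since $E(u)E(|w|)$ is. I read $E(u)$ as real and nonnegative where needed, i.e. $u\ge 0$ or $E(u)$ replaced by $E(|u|)$. With a complex $u$ the same argument applies after multiplying the test functions below by a unimodular $\mathcal{A}$-measurable factor.

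\emph{Necessity.} Suppose $T$ is compact but, for some $\varepsilon>0$, $K_\varepsilon$ is not a finite union of $\mathcal{A}$-atoms. Then, up to null sets, $K_\varepsilon$ contains either infinitely many distinct $\mathcal{A}$-atoms or a non-atomic $\mathcal{A}$-set of positive measure. In both cases, using $\sigma$-finiteness as in the proof of Corollary \ref{c2.2}, I obtain pairwise disjoint $B_n\in\mathcal{A}$, $B_n\subseteq K_\varepsilon$, with $0<\mu(B_n)<\infty$. Put $f_n=\chi_{B_n}/\mu(B_n)$, so that $\|f_n\|_1=1$. Since $\chi_{B_n}$ is $\mathcal{A}$-measurable,
\[
Tf_n=\frac{wE(u)\chi_{B_n}}{\mu(B_n)}, \qquad \|Tf_n\|_1=\frac{1}{\mu(B_n)}\int_{B_n}E(|w|)E(u)\,d\mu\ \ge\ \varepsilon .
\]
Here the integral of $|w|$ against the $\mathcal{A}$-measurable function $E(u)\chi_{B_n}$ has been replaced by the integral of $E(|w|)$. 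Because the $Tf_n$ have disjoint supports, $\|Tf_n-Tf_m\|_1\ge 2\varepsilon$ for $n\neq m$. So $(Tf_n)$ has no convergent subsequence, which contradicts compactness.

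\emph{Sufficiency.} Fix $\varepsilon>0$ and write $K_\varepsilon=A_1\cup\dots\cup A_N$ with $A_j$ $\mathcal{A}$-atoms. Split
\[
T=T M_{\chi_{K_\varepsilon}}+T M_{\chi_{X\setminus K_\varepsilon}}.
\]
On an atom $A_j$ every $\mathcal{A}$-measurable function is constant, so
\[
TM_{\chi_{A_j}}f=\Big(\frac{1}{\mu(A_j)}\int_{A_j}uf\,d\mu\Big)\,w\chi_{A_j},
\]
which is rank one. Hence $TM_{\chi_{K_\varepsilon}}$ has finite rank. It then remains to show that $\|TM_{\chi_{X\setminus K_\varepsilon}}\|\to 0$ as $\varepsilon\to 0$, after which $T$ is a norm limit of finite-rank operators and therefore compact. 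The operator $TM_{\chi_{X\setminus K_\varepsilon}}$ is again a WCE operator, namely $M_wEM_{u\chi_{X\setminus K_\varepsilon}}$. By the norm formula for bounded WCE operators on $L^1$ recalled above, its norm is $\|\chi_{X\setminus K_\varepsilon}\,uE(|w|)\|_\infty$.

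\emph{Main obstacle.} The difficulty is bounding $\|\chi_{X\setminus K_\varepsilon}\,uE(|w|)\|_\infty$ by a quantity that tends to $0$ with $\varepsilon$. The set $K_\varepsilon$ only controls the averaged quantity $E(u)E(|w|)$, while the norm involves the pointwise supremum of $u$ on $\mathcal{A}$-sets. I would first attempt a direct estimate. Take $f$ in the unit ball of $L^1$ supported off $K_\varepsilon$; then $\|Tf\|_1=\int E(|w|)\,|E(uf)|\,d\mu$. I would try to bound this by $\varepsilon$ times a constant, using either the nonnegativity structure $E(u)$ or a reduction to $f$ that are $\mathcal{A}$-measurable multiples of $\chi$'s. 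If this comparison cannot be made in general, the fallback is to run the whole argument with the quantity $uE(|w|)$ and $\Sigma$-atoms, matching the norm formula and the earlier compactness criterion quoted from \cite{ej}. In that case I would record explicitly the extra hypothesis under which $E(u)E(|w|)$ and $uE(|w|)$ have comparable level sets, for instance $u$ being $\mathcal{A}$-measurable.
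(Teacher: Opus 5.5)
Your necessity argument is correct. With $f_n=\chi_{B_n}/\mu(B_n)$ it is cleaner than the paper's, which uses $\overline{E(u)}E(|w|)\chi_{A_n}/(\mu(A_n)\|T\|^2)$ and the same disjoint-support estimate. For sufficiency you use the same splitting $T_\varepsilon=TM_{\chi_{K_\varepsilon}}$ as the paper. You also identify exactly the step the paper dismisses with ``it is clear that $\|T-T_\varepsilon\|<\varepsilon$''.

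That gap cannot be closed by any direct estimate, because the sufficiency direction is false as stated. It fails even for $u\ge 0$, so under every reading of $E(u)$ you allow. Take $X=[1,\infty)$ with Lebesgue measure, $\mathcal{A}$ generated by $A_n=[n,n+1)$, $w=1$ and $u=\sum_n\chi_{[n,n+1/n)}$. Then $E(|w|)=1$ and $E(u)=E(|u|)=\sum_n\frac1n\chi_{A_n}$. So $K_\varepsilon=\bigcup_{n\le 1/\varepsilon}A_n$ is a finite union of $\mathcal{A}$-atoms for every $\varepsilon>0$, and $\|T\|=\|u\|_\infty=1$. But $f_n=n\chi_{[n,n+1/n)}$ has $\|f_n\|_1=1$ and $Tf_n=E(f_n)=\chi_{A_n}$. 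Hence $\|Tf_n-Tf_m\|_1=2$ for $n\ne m$, and $T$ is not compact. Correspondingly, $\|T-T_\varepsilon\|=\|\chi_{X\setminus K_\varepsilon}u\|_\infty=1$ for all $\varepsilon<1$.

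So the statement itself must be changed, and only your second fallback survives. The $\Sigma$-atom criterion with $uE(|w|)$ fails in the necessity direction. For $X=[0,1]$, $\mathcal{A}=\{\emptyset,X\}$ and $u=w=1$, the operator $Tf=\int_0^1 f\,d\mu$ is rank one, yet $\{uE(|w|)\ge\varepsilon\}=[0,1]$ contains no $\Sigma$-atom.

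If $u$ is $\mathcal{A}$-measurable and $K_\varepsilon$ is defined with $|E(u)|$, then $|u|E(|w|)=|E(u)|E(|w|)<\varepsilon$ off $K_\varepsilon$. The norm formula then gives $\|T-T_\varepsilon\|\le\varepsilon$, and your proof is complete.

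For general $u$, the correct condition is that for each $\varepsilon>0$ the set $\{|u|E(|w|)\ge\varepsilon\}$ is contained, up to a null set, in finitely many $\mathcal{A}$-atoms. Equivalently, $uE(|w|)=0$ a.e. on the non-atomic part and $\|u\chi_{A_n}\|_\infty E(|w|)(A_n)\to 0$.
\begin{itemize}
\item Sufficiency: your splitting works with $K_\varepsilon$ replaced by this finite union of atoms.
\item Necessity: your argument goes through with $f_n=\frac{\overline{u}}{|u|}\,\chi_{F_n}/\mu(F_n)$. Here $F_n\subseteq C_n\cap\{|u|E(|w|)\ge\varepsilon\}$ has positive finite measure, and the $C_n\in\mathcal{A}$ are pairwise disjoint (distinct atoms, or pieces of the non-atomic part). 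Then $Tf_n$ lives on $C_n$ and $\|Tf_n\|_1=\int E(|w|)\,|u|\,\chi_{F_n}\,d\mu/\mu(F_n)\ge\varepsilon$.
\end{itemize}
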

\begin{proof}
Suppose that $T$ is
compact but for some
$\varepsilon>0$ the set $K_\varepsilon$ either contains a
 subset of non-atomic part $A$ with positive measure or has infinitely many $\mathcal{A}$-atoms. In both cases we
 can find a sequence of pairwise disjoint measurable subsets
 $\{A_{n}\}_{n\in \mathbb{N}}$ with $0<\mu(A_{n})<\infty$.
 Define $f_{n}=\frac{\overline{E(u)}E(|w|)\chi_{A_{n}}}{\mu(A_{n})\|T\|^2}$. Then
 $\|f_{n}\|_{1}\leq1/\|T\|$ and $\sigma(f_n)\cap\sigma(f_m)=\emptyset$, for $n\neq
 m$. Hence we have
  Then
  \begin{align*}
  \|Tf_{n}-Tf_{m}\|_{1}&\geq\int_{A_{n}}E(|w|)|E(uf_{n}-uf_{m})|d\mu\\
 &=\int_{A_{n}}\frac{(E(|w|))^2|E(u)|^2\chi_{A_{n}}}{\mu(A_{n})\|T\|^2}d\mu\\
 &\geq\frac{\varepsilon^{2}}{\|T\|^2}.
 \end{align*}
which shows that the sequence $\{Tf_n\}$ dose not contain a
convergent subsequence, and so $T$ is not compact.\\

Conversely, suppose that for each $\varepsilon>0$,
$K_\varepsilon=\cup^{n}_ {k=1}A^{\varepsilon}_{k}$. Define
$T_{\varepsilon}(f)=T(f\chi_{K_\varepsilon})=\Sigma_{k=1}^nT(f\chi_{A^{\varepsilon}_{k}})$,
for all $f\in L^{1}(\Sigma)$. It is clear that $T_{\varepsilon}$ is
a finitely rank operator on $L^{1}(\Sigma)$ and
$\|T-T_{\varepsilon}\|<\varepsilon$. Thus $T$ is compact.
\end{proof}

\begin{prop}\label{p2.7}
Let $\mathcal{A}\subseteq \Sigma$ be a $\sigma$- finite subalgebra such that the measure space $(X, \mathcal{A}, \mu)$ is a purely atomic measure space. Then the bounded WCE operator $T=M_wEM_u:L^1(\Sigma)\rightarrow L^1(\Sigma)$ is nuclear on if and only if
$$\sum^{\infty}_{n=1}E(|w|)(A_n)E(|u|)(A_n)<\infty,$$
in which $\{A_n\}^{\infty}_{n=1}$ pair-wise disjoint $\mathcal{A}$-atoms such that $X=\cup^{\infty}_{n=0}A_n$.
\end{prop}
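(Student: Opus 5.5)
The plan is to mirror the proof of Proposition \ref{p2.5}, using $p=1$, $q=\infty$ and $L^1(\mu)^*\simeq L^\infty(\mu)$. Write $X=\bigcup_n A_n$ with disjoint $\mathcal{A}$-atoms. Since $\mathcal{A}$-measurable functions are constant on each atom, for $f\in L^1(\Sigma)$ we have
$$Tf=\sum_{n=1}^\infty\Big(\int_X u\chi_{A_n}f\,d\mu\Big)\frac{w\chi_{A_n}}{\mu(A_n)} .$$
The natural choices are therefore $\phi_n(f)=\int_X u\chi_{A_n}f\,d\mu$ and $g_n=w\chi_{A_n}/\mu(A_n)$, so that $T=\sum_n\phi_n\otimes g_n$ with convergence in $L^1$ because the supports are disjoint. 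A direct computation gives $\|g_n\|_1=E(|w|)(A_n)$.

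\textbf{Sufficiency.} Nuclearity follows once $\sum_n\|\phi_n\|\,\|g_n\|_1<\infty$. Here $\|\phi_n\|=\|u\chi_{A_n}\|_\infty$, and this is the main obstacle: in general $\|u\chi_{A_n}\|_\infty\ge E(|u|)(A_n)$, so the hypothesis does not directly control the series.

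\begin{itemize}
\item \emph{Tool.} The only extra information available is the boundedness of $T$, namely $\|uE(|w|)\|_\infty<\infty$. On $A_n$ this reads $\|u\chi_{A_n}\|_\infty\,E(|w|)(A_n)\le\|T\|$.
\item \emph{Attempt.} I would first try to replace each rank-one piece $\phi_n\otimes g_n$ by a cheaper nuclear representation of $T\chi_{A_n}$. For instance, split $A_n$ into the $\Sigma$-level sets of $|u|$ and average, aiming at the estimate $\|T\chi_{A_n}\|_N\lesssim E(|w|)(A_n)\,E(|u|)(A_n)$.
\item \emph{Fallback.} If this fails, I would prove sufficiency under the hypothesis actually delivered by the decomposition, $\sum_n E(|w|)(A_n)\|u\chi_{A_n}\|_\infty<\infty$. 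I would then check whether, in the setting of the proposition (where the paper's $\|\phi_n\|$ plays the role of $E(|u|)(A_n)$, exactly as $\|\phi_n\|=(E(|u|^q)(A_n))^{1/q}$ in Proposition \ref{p2.5}), the two conditions coincide. This certainly holds when $|u|$ is $\mathcal{A}$-measurable.
\end{itemize}

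\textbf{Necessity.} This direction goes through cleanly.

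\begin{enumerate}
\item If $T$ is nuclear, it is absolutely $1$-summing. Theorem \ref{thm:Pietsch} then gives $M>0$ and a probability measure $\nu$ on $B_{L^\infty}$ with $\|Tf\|_1\le M\int|l(f)|\,d\nu(l)$.
\item Test this on $f_i=\overline{u}\,\chi_{A_i}/\big(\mu(A_i)\,\|u\chi_{A_i}\|_\infty\big)$, or on $f_i=\overline{\operatorname{sgn}u}\,\chi_{A_i}/\mu(A_i)$, which has $\|f_i\|_1=1$. This gives $\|Tf_i\|_1=E(|w|)(A_i)\,E(|u|)(A_i)$, since $E(uf_i)=E(|u|)(A_i)$ on $A_i$.
\item On the other side, $|l(f_i)|\le\frac{1}{\mu(A_i)}\int_{A_i}|g_l|\,d\mu$.
\item Summing over $i$ and using Fubini gives
$$\sum_iE(|w|)(A_i)E(|u|)(A_i)\le M\int_{B_{L^\infty}}\sum_i\frac{1}{\mu(A_i)}\int_{A_i}|g_l|\,d\mu\,d\nu .$$
\end{enumerate}

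The inner sum is not bounded by $\|g_l\|_\infty$ uniformly in $i$, so step 4 needs care. I would replace the sum by $\sum_i|l(f_i)|$ and use the $1$-summing inequality itself: $\sum_i\|Tf_i\|_1\le\pi_1(T)\sup_{|\alpha_i|=1}\big\|\sum_i\alpha_if_i\big\|_1$. That supremum equals $\sum_i\|f_i\|_1$ by disjointness, so this route needs normalising weights $c_i$ chosen as in the paper to make the right-hand side finite. I expect this bookkeeping to be the second delicate point after sufficiency.
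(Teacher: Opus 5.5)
Your proposal does not prove the statement. In the sufficiency direction it cannot be completed, because that direction is false. You correctly noticed that $\|\phi_n\|=\|u\chi_{A_n}\|_\infty\ge E(|u|)(A_n)$. The paper's proof asserts the reverse inequality, and that is exactly where it breaks.

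Your ``attempt'' cannot succeed. Writing $P_nf=\chi_{A_n}f$, you have $P_nTP_n=\phi_n\otimes g_n$. This is rank one, so its nuclear norm equals its operator norm, $\|u\chi_{A_n}\|_\infty E(|w|)(A_n)$.

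Here is a counterexample to sufficiency. Take $X=\mathbb{N}$ with counting measure, $A_n$ consecutive blocks with $\mu(A_n)=n^2$, points $x_n\in A_n$, $u=\sum_n\chi_{\{x_n\}}$ and $w\equiv1$. Then:
\begin{itemize}
\item $T$ is bounded, since $\|uE(|w|)\|_\infty=1$;
\item the hypothesis holds, since $\sum_nE(|w|)(A_n)E(|u|)(A_n)=\sum_n n^{-2}<\infty$;
\item but $Tf=\sum_n n^{-2}f(x_n)\chi_{A_n}$, so $\|T\chi_{\{x_k\}}-T\chi_{\{x_j\}}\|_1=2$ for $k\neq j$.
\end{itemize}
Hence $T$ is not even compact. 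Your fallback condition $\sum_n\|u\chi_{A_n}\|_\infty E(|w|)(A_n)<\infty$ is the correct characterization. You should say so, rather than ``check whether the two conditions coincide''. They coincide only when $|u|$ is constant on each atom.

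For necessity, the route through Pietsch domination cannot work for any choice of normalising weights, because absolute $1$-summability does not imply the condition. Take $\mathcal{A}=\Sigma=2^{\mathbb{N}}$ with counting measure, $w\equiv1$, and $u=\sigma$ with $\sigma_n=1/n$. Then $T=M_\sigma$ on $\ell^1$ factors as $\ell^1\hookrightarrow\ell^2\xrightarrow{M_\sigma}\ell^1$. The inclusion is $1$-summing by Grothendieck's theorem, so $T$ is $1$-summing. Yet $\sum_nE(|w|)(A_n)E(|u|)(A_n)=\sum_n 1/n=\infty$.

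The paper's ``bookkeeping'' does not rescue this. It rests on $\|g_l\|_\infty=\sum_n\|g_l\chi_{A_n}\|_\infty$, which is false because the left side is a supremum.

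The missing idea is to use a nuclear representation directly, together with additivity of the $L^1$-norm over the atoms. Suppose $T=\sum_k\psi_k\otimes h_k$ with $\sum_k\|\psi_k\|_\infty\|h_k\|_1<\infty$. Compressing by $P_n$ gives
\[
\|u\chi_{A_n}\|_\infty E(|w|)(A_n)=\|P_nTP_n\|\le\sum_k\|\psi_k\|_\infty\|h_k\chi_{A_n}\|_1 .
\]
Summing over $n$ yields
\[
\sum_nE(|w|)(A_n)E(|u|)(A_n)\le\sum_n\|u\chi_{A_n}\|_\infty E(|w|)(A_n)\le\|T\|_N .
\]
Combined with your decomposition, this shows $\|T\|_N=\sum_n\|u\chi_{A_n}\|_\infty E(|w|)(A_n)$. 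So only the necessity half of the stated equivalence is true.

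A minor slip: for your second $f_i$, $E(uf_i)=E(|u|)(A_i)/\mu(A_i)$ on $A_i$. Your value of $\|Tf_i\|_1$ is nonetheless correct.
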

\begin{proof}
Slimilar to the proof of Proposition \ref{p2.5} for every $f\in L^p(\Sigma)$ we have
\begin{align*}
Tf=w.E(uf)&=w.\sum_{n=1}^{\infty}E(u.f)(A_n).\chi_{A_n}\\
&=w.\sum_{n=1}^{\infty}\left(\frac{1}{\mu(A_n)}\int_{A_n}E(uf)d\mu\right).\chi_{A_n}\\
&=w.\sum_{n=1}^{\infty}\left(\frac{1}{\mu(A_n)}\int_{A_n}ufd\mu\right).\chi_{A_n}.
\end{align*}
Let
$$\sum^{\infty}_{n=1}E(|w|)(A_n)E(|u|)(A_n)<\infty.$$
 Since $T$ is bounded and the measure space $(X, \mathcal{A}, \mu\mid_{\mathcal{A}})$ is a purely atomic, then we have
$$\sup_{n\in \mathbb{N}}\left[E(|w|)(A_n)E(|u|)(A_n)\right]=\|E(|w|)E(|u|)\|_{\infty}\leq\|uE(|w|)\|_{\infty}=\|T\|.$$
If we set $\phi_n(f)=\int_X(u.\chi_{A_n})fd\mu$ and $g_n=\frac{w.\chi_{A_n}}{\mu(A_n)}$, then we get that for every $n\in \mathbb{N}$, $u.\chi_{A_n}\in L^{\infty}(\Sigma)$ and  $g_n\in L^{1}(\Sigma)$. It is clear that for each $n\in \mathbb{N}$, the function $\phi_n$ is a bounded linear functional on $L^1(\Sigma)$. Also, $\|g_n\|_1=E(|w|)(A_n)$ and $\|\phi_n\|\leq E(|u|)(A_n)$. Therefore,

$$T=M_wEM_u=\sum_{n=1}^{\infty}\phi_n\otimes g_n,$$
and
$$\sum_{n=1}^{\infty}\|\phi_n\|\|g_n\|\leq\sum^{\infty}_{n=1}E(|w|)(A_n)E(|u|)(A_n)<\infty.$$
This implies that $T$ is nuclear.\\

Conversely, let the WCE operator $T=M_wEM_u$ be nuclear on $L^1(\mu)$. Then $T$ is absolutely summing and so by Pietsch’s  theorem 2.12 of \cite{die}, there is a Borel probability measure $\nu$ on the unit ball $B_{\infty}$ of $L^{\infty}(\mu)\simeq(L^1(\mu))^*$ such that for some $M>0$

$$\|Tf\|_1\leq M\int_{B_{\infty}}|l(f)|d\nu(l), \ \ \ \ \ \forall f\in L^1(\mu).$$
By Riesz representation theorem, for each $l\in (L^1(\mu))^*$, there exists $g_l\in L^{\infty}(\mu)$ such that
  $$l(f)=\int_Xf.g_l d\mu, \ \ \ \ \ f\in L^1(\mu).$$
  
And hence we have
\begin{align*}
\|Tf\|_1&\leq M\int_{B_{\infty}}|l(f)|d\nu(l)\\
&=M\int_{B_{\infty}}\int_X|f.g_l| d\mu d\nu(g_l)\\
&\leq M\int_{B_{\infty}}\|g_l\|_{\infty}\|f\|_1d\nu(g_l),
\end{align*}
 
Since $\{A_n\}^{\infty}_{n=1}$ is a sequence of disjoint atoms, for each $n\in \mathbb{N}$ we define
$$f_{n}=\frac{\overline{E(u)}E(|w|)\chi_{A_{n}}}{\mu(A_{n})\|T\|(\|T\|+1)}.$$
It is easy to see that $f_n\in L^1(\Sigma)$ and $\|f_n\|_1\leq 1$. Hence we have

\begin{align*}
\|Tf_n\|_1&=\frac{1}{\|T\|(\|T\|+1)}\int_{A_n}\frac{1}{\mu(A_{n})}\left(E(|w|)E(|u|)\right)^{2}d\mu\\
&=\frac{1}{\|T\|(\|T\|+1)}\left(E(|w|)(A_n)E(|u|)(A_n)\right)^{2}.
\end{align*}
By these observations we get that
\begin{align*}
\|Tf_n\|_1&=\frac{1}{\|T\|(\|T\|+1)}\left(E(|w|)(A_n)E(|u|)(A_n)\right)^{2}\\
&\leq M\int_{B_{\infty}}\|g_l.\chi_{A_n}\|_{\infty}\|f\|_1d\nu(g_l),\\
&\leq  M\int_{B_{\infty}}\|g_l.\chi_{A_n}\|_{\infty}d\nu(g_l).\\
\end{align*}
And so
$$E(|w|)(A_n)E(|u|)(A_n)\leq  M^{\frac{1}{2}}(\|T\|(\|T\|+1))^{\frac{1}{2}}\left(\int_{B_{\infty}}\|g_l.\chi_{A_n}\|_{\infty}d\nu(g_l)\right)^{\frac{1}{2}}.\\$$
It is clear that $\|g_l\|_{\infty}=\sum^{\infty}_{n=1}\|g_l.\chi_{A_n}\|_{\infty}$. Therefore
\begin{align*}
\sum^{\infty}_{i=1}E(|w|)(A_n)E(|u|)(A_n)&\leq M^{\frac{1}{2}}(\|T\|(\|T\|+1))^{\frac{1}{2}}\sum^{\infty}_{i=1}\left(\int_{B_{\infty}}\|g_l.\chi_{A_n}\|_{\infty}d\nu(g_l)\right)^{\frac{1}{2}}\\
&\leq M^{\frac{1}{2}}(\|T\|(\|T\|+1))^{\frac{1}{2}}\left(\int_{B_{\infty}}\sum^{\infty}_{i=1}\|g_l.\chi_{A_n}\|_{\infty}d\nu(g_l)\right)^{\frac{1}{2}}\\
&=M^{\frac{1}{2}}(\|T\|(\|T\|+1))^{\frac{1}{2}}\left(\int_{B_{\infty}}\|g_l\|_{\infty}d\nu(g_l)\right)^{\frac{1}{2}}\\
&\leq M^{\frac{1}{2}}(\|T\|(\|T\|+1))^{\frac{1}{2}}.
\end{align*}
This implies that 
$$\sum^{\infty}_{i=1}E(|w|)(A_n)E(|u|)(A_n)\leq M^{\frac{1}{2}}(\|T\|(\|T\|+1))^{\frac{1}{2}}<\infty.$$
\end{proof}

\begin{rem}
The WCE operator $T=M_wEM_u:L^p(\mu)\rightarrow L^p(\mu)$ is nuclear if and only if $T^*$ is nuclear, for all $1\leq p<\infty$.
\end{rem}

Let $\mathcal{A}\subseteq \Sigma$ be a $\sigma$-finite subalgebra of $\Sigma$. Then $X=\left (\bigcup_{n\in\mathbb{N}}A_n\right )\cup B$, where
$\{A_n\}_{n\in\mathbb{N}}$ is a countable collection of pairwise disjoint $\mathcal{A}$-atoms and $B$, being disjoint from each
$A_n$, is non-atomic. In the following theorem we characterize nuclear WCE operators on general $L^p(\mu)$-spaces.

  \begin{thm}\label{t2.10}
Let $1<p<\infty$ and $\mathcal{A}\subseteq \Sigma$ be a $\sigma$- finite subalgebra. Then the bounded WCE operator $T=M_wEM_u$ is nuclear on $L^p(\Sigma)$ if and only if
\begin{itemize}
  \item $(E(|w|^{p}))^{\frac{1}{p}}(E(|u|^{q})))^{\frac{1}{q}}=0$, $\mu$-a.e., on $B$. 
  \item $\sum^{\infty}_{n=1}(E(|w|^{p})(A_n))^{\frac{1}{p}}(E(|u|^{q})(A_n))^{\frac{1}{q}}<\infty$,
\end{itemize}
in which $\{A_n\}^{\infty}_{n=1}$ pair-wise disjoint $\mathcal{A}$-atoms and $B$ is non-atomic such that $X=\cup^{\infty}_{n=0}A_n\cup B$ and $\frac{1}{p}+\frac{1}{q}=1$.
\end{thm}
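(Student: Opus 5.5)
The plan is to split $T$ along the decomposition $X=\left(\bigcup_n A_n\right)\cup B$. Both $B$ and the $A_n$ lie in $\mathcal{A}$, so $\chi_B$ and $\chi_{A_n}$ are $\mathcal{A}$-measurable and pull out of $E$. Hence
$$T=M_{\chi_B}T+M_{\chi_{A}}T,\qquad A=\bigcup_n A_n,$$
where $M_{\chi_B}T=M_{w\chi_B}EM_{u}$ and $M_{\chi_A}T=M_{w\chi_A}EM_u$. Both pieces are again bounded WCE operators, and they act on the complemented pieces $L^p(B)$ and $L^p(A)$ of $L^p(\Sigma)$. Moreover $M_{\chi_B}$ and $M_{\chi_A}$ are norm-one projections, and $T M_{\chi_B}=M_{\chi_B}T$. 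Since nuclear operators form an ideal, $T$ is nuclear if and only if both $M_{\chi_B}T$ and $M_{\chi_A}T$ are nuclear: nuclearity of $T$ passes to $M_{\chi_B}T$ and $M_{\chi_A}T$ by composition with bounded projections, and conversely a sum of two nuclear operators is nuclear.

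\emph{Sufficiency.} If the first condition holds, then for $f\in L^p(\Sigma)$ the conditional H\"older inequality gives
$$|M_{\chi_B}Tf|=|w\chi_B E(uf)|\le \chi_B|w|\,(E|u|^q)^{1/q}(E|f|^p)^{1/p}.$$
Integrating the $p$-th power and using $E(|w|^p\chi_B\cdots)=\chi_B E(|w|^p)\cdots$, we get
$$\|M_{\chi_B}Tf\|_p^p\le\int_B E(|w|^p)\,(E|u|^q)^{p/q}\,E(|f|^p)\,d\mu=0,$$
so $M_{\chi_B}T=0$. For the atomic piece, I will repeat verbatim the first half of the proof of Proposition~\ref{p2.5}, with $\phi_n(f)=\int u\chi_{A_n}f\,d\mu$ and $g_n=w\chi_{A_n}/\mu(A_n)$. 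This gives $M_{\chi_A}T=\sum_n\phi_n\otimes g_n$ with $\sum_n\|\phi_n\|\|g_n\|$ equal to the series in the second condition, which is finite. Hence $T=M_{\chi_A}T$ is nuclear.

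\emph{Necessity.} Suppose $T$ is nuclear; then $M_{\chi_B}T$ is nuclear, hence compact. If the first condition failed, there would be $\delta>0$ and an $\mathcal{A}$-measurable $B'\subseteq B$ of positive measure on which $(E|w|^p)^{1/p}(E|u|^q)^{1/q}>\delta$. Since $B'$ is non-atomic, it contains infinitely many disjoint $\mathcal{A}$-sets $B_n$ with $0<\mu(B_n)<\infty$. The functions $f_n$ from the proof of Corollary~\ref{c2.2} (equivalently Corollary~\ref{c2.4}), supported on the $B_n$, are bounded in $L^p$, yet $\|Tf_n-Tf_m\|_p$ stays bounded below. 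This contradicts compactness, so the first condition holds. Next, $M_{\chi_A}T$ is nuclear and is a WCE operator living on the purely atomic part $A$ (with $\sigma$-algebra $\mathcal{A}\cap A$). The converse direction of Proposition~\ref{p2.5}, applied on $L^p(A)$, then yields the summability in the second condition.

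The main obstacle is justifying the reduction to $A$ in the last step: Proposition~\ref{p2.5} was stated on all of $X$, so I would check that its Pietsch-domination argument only uses the test functions $f_i$ supported on the atoms. That argument runs unchanged for the restricted operator, or directly for $T$, since each $f_i$ is supported in $A_i$. A secondary point is choosing $B'\in\mathcal{A}$ rather than merely in $\Sigma$. This is automatic, because $(E|w|^p)^{1/p}(E|u|^q)^{1/q}$ is $\mathcal{A}$-measurable, so its superlevel sets lie in $\mathcal{A}$.
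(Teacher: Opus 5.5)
Your route is the paper's: split along $X=(\bigcup_n A_n)\cup B$, kill the non-atomic piece with the non-compactness argument of Corollaries~\ref{c2.2} and~\ref{c2.4}, and treat the atomic piece with Proposition~\ref{p2.5}. Your decomposition, the sufficiency direction and the argument on $B$ are fine.

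The gap is in the step you flag as the main obstacle, though not for the reason you give. Restricting to $A$ is harmless: $RTJ$ on $L^p(A)$ is nuclear and is a WCE operator over the purely atomic $\mathcal{A}\cap A$. The real problem is that the Pietsch-domination argument of Proposition~\ref{p2.5}, which you plan to re-run ``unchanged'', does not prove the summability. The same defect is already in the paper's proof.

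That argument uses only that $T$ is absolutely $1$-summing, and this is strictly weaker than nuclearity here. Take $p=2$, $X=\mathbb{N}$ with counting measure, $\mathcal{A}=\Sigma=2^{\mathbb{N}}$ (so $E=I$), $u\equiv 1$ and $w(n)=1/n$. Then $T=M_w$ factors as $\ell^2\to\ell^1\hookrightarrow\ell^2$. The inclusion is $1$-summing by Grothendieck's theorem, so $T$ is $1$-summing. Yet $\sum_n (E(|w|^2)(n))^{1/2}(E(|u|^2)(n))^{1/2}=\sum_n 1/n=\infty$.

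Concretely, the chain in Proposition~\ref{p2.5} breaks in two places:
\begin{itemize}
\item at $\|g_l\|_q=\sum_i\|g_l\chi_{A_i}\|_q$, since for $q>1$ only $\|g_l\|_q^q=\sum_i\|g_l\chi_{A_i}\|_q^q$ holds;
\item at $\sum_i x_i^{1/q}\le(\sum_i x_i)^{1/q}$, which is the reverse of the true inequality.
\end{itemize}

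What you need instead is a trace-type estimate that uses nuclearity itself. Let $a_n$ denote the $n$-th term of the series. Choose norm-one $h_n\in L^p$ and $k_n\in L^q$, both supported in $A_n$, with $\int u h_n\,d\mu=\|u\chi_{A_n}\|_q$ and $\int k_n w\,d\mu=\|w\chi_{A_n}\|_p$. Since $E(uh_n)$ equals the constant $\mu(A_n)^{-1}\int_{A_n}uh_n\,d\mu$ on $A_n$ and vanishes off it, you get $\int k_n\,Th_n\,d\mu=\mu(A_n)^{-1}\|u\chi_{A_n}\|_q\|w\chi_{A_n}\|_p=a_n$.

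Now suppose $T=\sum_k\psi_k\otimes y_k$ with $\sum_k\|\psi_k\|\,\|y_k\|_p<\infty$. Because the $h_n$ and the $k_n$ are disjointly supported, $\|(\psi_k(h_n))_n\|_{\ell^q}\le\|\psi_k\|$ and $\|(\int k_ny_k\,d\mu)_n\|_{\ell^p}\le\|y_k\|_p$. By H\"older,
\[
\sum_n a_n\le\sum_k\sum_n|\psi_k(h_n)|\,\Bigl|\int k_ny_k\,d\mu\Bigr|\le\sum_k\|\psi_k\|\,\|y_k\|_p<\infty .
\]
This works directly for $T$ on $L^p(\Sigma)$, so no restriction to $A$ is needed. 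With this replacing the appeal to the Pietsch argument, the rest of your proof goes through.
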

\begin{proof}
By combining Proposition \ref{p2.5} and Corollary \ref{c2.4} we have the result.
\end{proof}
\begin{thm}\label{t2.11}
Let $\mathcal{A}\subseteq \Sigma$ be a $\sigma$- finite subalgebra. Then the bounded WCE operator $T=M_wEM_u$ is nuclear on $L^1(\Sigma)$ if and only if
\begin{itemize}
  \item $E(|w|)E(|u|)=0$, $\mu$-a.e., on $B$.
  \item $\sum^{\infty}_{n=1}E(|w|)(A_n)E(|u|)(A_n)<\infty$,
\end{itemize}
in which $\{A_n\}^{\infty}_{n=1}$ pair-wise disjoint $\mathcal{A}$-atoms and $B$ is non-atomic such that $X=\cup^{\infty}_{n=0}A_n\cup B$ and $\frac{1}{p}+\frac{1}{q}=1$.
\end{thm}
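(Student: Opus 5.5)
The plan mirrors the proof of Theorem \ref{t2.10}: split $X=\left(\bigcup_n A_n\right)\cup B$ and write $T=T_B+T_a$. Here $T_B f=\chi_B\, wE(uf)=wE(u\chi_B f)$, using that $\chi_B$ is $\mathcal{A}$-measurable, and $T_a f=\sum_n \chi_{A_n} wE(uf)$. Both pieces are compressions $P_B T P_B$ and $P_{B^c}TP_{B^c}$ of $T$ by multiplication projections with norm at most one. Since the nuclear operators form an ideal, $T$ is nuclear if and only if both $T_B$ and $T_a$ are nuclear.

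For necessity, suppose $T$ is nuclear. Then $T_B$ is a nuclear WCE operator living on the non-atomic part. If $E(|w|)E(|u|)$ were nonzero on a subset of $B$ of positive measure, the $p=1$ argument of Corollary \ref{c2.2} / Corollary \ref{c2.4} applies. That argument picks $\delta>0$ and disjoint $B_n\in\mathcal{A}$ of finite positive measure inside $\{E(|u|)E(|w|)>\delta\}\cap B$. It then produces a bounded sequence $f_n$ with $\{T f_n\}$ having no convergent subsequence, so $T_B$ is not compact, which contradicts nuclearity. This gives the first condition. The operator $T_a$ is then a nuclear WCE operator on the atomic part, which is $L^1$ of a purely atomic $\mathcal{A}$. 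Proposition \ref{p2.7}, applied on $\bigcup_n A_n$, gives $\sum_n E(|w|)(A_n)E(|u|)(A_n)<\infty$.

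For sufficiency, the first condition gives $E(|u|)E(|w|)=0$ a.e.\ on $B$. Then $\|T_B f\|_1=\int_B |w||E(uf)|\,d\mu\le\int_B E(|w|)E(|u||f|)\,d\mu$. On $S(E(|w|))\cap B$ we have $E(|u|)=0$, hence $u=0$ a.e.\ there, since $S(|u|)\subseteq S(E|u|)$. Therefore $E(|u||f|)$ vanishes there as well, because $S(|u||f|)\subseteq S(|u|)$ and $E$ preserves supports of nonnegative functions up to $\mathcal{A}$-measurable sets. So $T_B=0$. The atomic part is nuclear by the sufficiency direction of Proposition \ref{p2.7}, via the explicit representation $T_a=\sum_n\phi_n\otimes g_n$ with $\phi_n(f)=\int u\chi_{A_n}f\,d\mu$ and $g_n=w\chi_{A_n}/\mu(A_n)$. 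Hence $T=T_a$ is nuclear.

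The main obstacle is the necessity on $B$ for $p=1$. The boundedness criterion $uE(|w|)\in L^\infty$ involves $u$ pointwise, while the condition in the theorem involves $E(|u|)$. The test functions must therefore be chosen so that $\|Tf_n\|_1$ is bounded below in terms of $E(|u|)E(|w|)$ and not $|E(u)|E(|w|)$. I would take $f_n=\overline{\mathrm{sgn}(u)}\chi_{B_n}/\mu(B_n)$. This gives $\|f_n\|_1=1$ and $E(uf_n)=E(|u|)\chi_{B_n}/\mu(B_n)$, so
$$\|Tf_n-Tf_m\|_1\ge\int_{B_n}\frac{E(|w|)E(|u|)}{\mu(B_n)}\,d\mu>\delta,$$
because the supports are disjoint. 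This yields the non-compactness cleanly.
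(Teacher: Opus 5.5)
Your structure is the paper's (Proposition \ref{p2.7} plus Corollary \ref{c2.4}), and your treatment of $B$ is sound. Your test functions $\overline{\mathrm{sgn}(u)}\chi_{B_n}/\mu(B_n)$ (norm $\le 1$, not $=1$) actually repair the paper's choice built on $\overline{E(u)}$, which breaks down where $E(u)=0<E(|u|)$. The genuine gap is the sufficiency on the atoms. On $L^1(\Sigma)$ the functional $\phi_n(f)=\int_X u\chi_{A_n}f\,d\mu$ has norm $\|u\chi_{A_n}\|_\infty$, and this is $\ge E(|u|)(A_n)$. The bound $\|\phi_n\|\le E(|u|)(A_n)$ in the proof of Proposition \ref{p2.7} goes the wrong way. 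So the representation $T_a=\sum_n\phi_n\otimes g_n$ gives nuclearity only when $\sum_n E(|w|)(A_n)\|u\chi_{A_n}\|_\infty<\infty$, which your hypothesis does not provide. No other representation can rescue the step, because the ``if'' direction of the statement is false.

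Here is a counterexample. Take $X=\mathbb{N}$ with counting measure, and let $\mathcal{A}$ be generated by consecutive blocks $A_n$ with $\mu(A_n)=n$. Put $x_n=\min A_n$, $u=\chi_{\{x_1,x_2,\dots\}}$ and $w=\sum_n\frac1n\chi_{A_n}$. Then $B=\emptyset$ and $E(|u|)(A_n)=E(|w|)(A_n)=\frac1n$, so the series in the theorem is $\sum_n n^{-2}<\infty$. Also $\|uE(|w|)\|_\infty=1$, so $T$ is bounded. However, $T\chi_{\{x_n\}}=n^{-2}\chi_{A_n}$, so for any representation $T=\sum_k x_k^*\otimes y_k$,
\[
\frac1n=\|\chi_{A_n}T\chi_{\{x_n\}}\|_1\le\sum_k\|x_k^*\|_\infty\|\chi_{A_n}y_k\|_1 .
\]
Summing over $n$ forces $\sum_k\|x_k^*\|_\infty\|y_k\|_1=\infty$, so $T$ is not nuclear.

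The same compression estimate gives the correct criterion. One has $\sum_n\|M_{\chi_{A_n}}TM_{\chi_{A_n}}\|_N\le\|T\|_N$, and each block is rank one with nuclear norm $\|uE(|w|)\chi_{A_n}\|_\infty$. So on $L^1$, $T$ is nuclear if and only if $\sum_n\|uE(|w|)\chi_{A_n}\|_\infty<\infty$, together with your condition on $B$. This is the summable version of the boundedness condition $uE(|w|)\in L^\infty$, and the genuine $q=\infty$ analogue of Theorem \ref{t2.10}.

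The necessity half of the stated theorem does survive, since $E(|w|)(A_n)E(|u|)(A_n)\le\|uE(|w|)\chi_{A_n}\|_\infty$. Prove it by this compression argument rather than by citing the paper's proof of Proposition \ref{p2.7}, which rests on the false identity $\|g_l\|_\infty=\sum_n\|g_l\chi_{A_n}\|_\infty$.
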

\begin{proof}
By combining Proposition \ref{p2.7} and Corollary \ref{c2.4} we have the result.
\end{proof}
\begin{cor}
Let $\mathcal{A}\subseteq \Sigma$ be a $\sigma$- finite subalgebra and $1\leq p<\infty$. Then the multiplication operator $M_u:L^p(\mu)\rightarrow L^p(\mu)$ is nuclear if and only if 
$u=0$, $\mu$-a.e., on $B$ and 
$$\sum^{\infty}_{n=1}|u|(A_n)<\infty.$$

\end{cor}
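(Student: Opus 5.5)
The plan is to read the multiplication operator $M_u$ as the special WCE operator $M_wEM_{u'}$ with $u'=1$ and $\mathcal{A}=\Sigma$, so that $E$ is the identity, and then specialize Theorem \ref{t2.10} (for $1<p<\infty$) and Theorem \ref{t2.11} (for $p=1$). Concretely, we set $w=u$, take the second weight to be $1$, and replace the subalgebra by $\Sigma$ itself. The atoms $A_n$ and the non-atomic part $B$ in the statement are then understood as the $\Sigma$-atoms and the non-atomic part of $(X,\Sigma,\mu)$. On each atom every measurable function is constant, so $|u|(A_n)$ denotes that constant value.

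In the case $1<p<\infty$ we have $E(|w|^p)^{1/p}=|u|$ and $E(|1|^q)^{1/q}=1$. The first bullet of Theorem \ref{t2.10} therefore becomes $u=0$ a.e.\ on $B$. For the second bullet we compute $(E(|w|^p)(A_n))^{1/p}(E(1)(A_n))^{1/q}=|u|(A_n)$, so the summability condition becomes $\sum_n|u|(A_n)<\infty$, which is exactly the claim.

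In the case $p=1$, Theorem \ref{t2.11} with $E=I$ gives $E(|w|)E(|1|)=|u|$. The two conditions reduce to the same pair as before, and the boundedness hypothesis $\|uE(|w|)\|_\infty<\infty$ (here simply $u\in L^\infty$) is automatically implied by summability together with vanishing on $B$.

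The main obstacle is justifying that $\mathcal{A}=\Sigma$ is admissible, since the displayed formula $\sum_n|u|(A_n)$ refers to atoms of the measure space and not of an arbitrary subalgebra. If one insists on keeping a general $\mathcal{A}$ with $E\neq I$, the statement no longer literally fits the WCE form, because $M_u$ need not commute with $E$. I would therefore state explicitly that the corollary is applied with $\mathcal{A}=\Sigma$.

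As a sanity check I would directly verify sufficiency. On the atomic part, $M_u=\sum_n u(A_n)\,\phi_n\otimes\chi_{A_n}$ with $\phi_n(f)=\int_{A_n}f\,d\mu/\mu(A_n)$. We have $\|\chi_{A_n}\|_p\|\phi_n\|=1$, so $\sum_n|u|(A_n)$ bounds the nuclear norm.

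Necessity follows from Corollary \ref{c2.4}, which handles the non-atomic part, and from Propositions \ref{p2.5} and \ref{p2.7}, which handle the atoms, all specialized in the same way.
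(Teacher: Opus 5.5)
Your reduction is the route the paper intends. The corollary is Theorems \ref{t2.10} and \ref{t2.11} specialized to $\mathcal{A}=\Sigma$ (so $E=I$) with weights $u$ and $1$. You are right that this reading is forced: for a coarser $\mathcal{A}$ the statement is false. For instance, with $\mathcal{A}=\{\emptyset,X\}$ on $[0,1]$ with Lebesgue measure and $u=1$, both conditions hold, yet $M_u=I$ is not compact. Several parts of your proposal are correct: the direct sufficiency computation, the remark that the two conditions force $u\in L^\infty$, and the necessity of $u=0$ a.e.\ on $B$ (nuclear operators are compact, and $|u|>\delta$ on a non-atomic set of positive measure rules out compactness). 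Keep the direct sufficiency check as the actual argument for $p=1$. The ``if'' half of Proposition \ref{p2.7} uses $\|\phi_n\|\le E(|u|)(A_n)$, which is false for general $\mathcal{A}$, since the norm is $\|u\chi_{A_n}\|_\infty$. The two only coincide on $\Sigma$-atoms.

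The genuine gap is the necessity of $\sum_n|u|(A_n)<\infty$, which you delegate to Propositions \ref{p2.5} and \ref{p2.7}. Their converse halves use nuclearity only through Pietsch domination, i.e.\ only the fact that $T$ is absolutely $1$-summing, and that can never produce an $\ell^1$ condition. Take counting measure on $\mathbb{N}$ and $u\in\ell^2\setminus\ell^1$. Then $M_u$ factors as $\ell^2\to\ell^1\hookrightarrow\ell^2$ and as $\ell^1\hookrightarrow\ell^2\to\ell^1$. Here $M_u:\ell^2\to\ell^1$ is bounded by Cauchy--Schwarz, and the inclusion $\ell^1\hookrightarrow\ell^2$ is $1$-summing by Grothendieck's theorem. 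So $M_u$ is $1$-summing on $\ell^2$ and on $\ell^1$, although $\sum_n|u(n)|=\infty$. (The steps that fail in those proofs are $\|g_l\|_q=\sum_i\|g_l\chi_{A_i}\|_q$, $\|g_l\|_\infty=\sum_n\|g_l\chi_{A_n}\|_\infty$, and $\sum_ic_i^{1/r}\le(\sum_ic_i)^{1/r}$ for $r>1$, which goes the other way.) You have to use the nuclear representation itself. Suppose $M_u=\sum_k f_k\otimes y_k$ with $f_k\in L^q$, $y_k\in L^p$. Put $e_n=\mu(A_n)^{-1/p}\chi_{A_n}$ and $e_n^*=\mu(A_n)^{-1/q}\chi_{A_n}$ (with $1/q=0$ when $p=1$), so that $\|e_n\|_p=\|e_n^*\|_q=1$ and
\[
u(A_n)=\int_X (M_ue_n)\,e_n^*\,d\mu=\sum_k\Bigl(\int_X f_ke_n\,d\mu\Bigr)\Bigl(\int_X y_ke_n^*\,d\mu\Bigr).
\]
Apply H\"older on each atom, then H\"older in $n$ together with the disjointness of the atoms. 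This gives $\sum_n\bigl|\int_X f_ke_n\,d\mu\bigr|\,\bigl|\int_X y_ke_n^*\,d\mu\bigr|\le\sum_n\|f_k\chi_{A_n}\|_q\|y_k\chi_{A_n}\|_p\le\|f_k\|_q\|y_k\|_p$. Summing over $k$ and taking the infimum over representations yields $\sum_n|u|(A_n)\le\|M_u\|_N<\infty$.
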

\begin{exam}
Let $X=\mathbb{N}$,
$\Sigma=2^{\mathbb{N}}$ and let $\mu$ be the counting measure. Put
$$A=\{\{2\},\{4,6\},
\{8,10,12\},\{14,16,18,20\},\cdots\}\cup\{\{1\}, \{3\}, \{5\},
\cdots \}.$$
If we let $A_{1}=\{2\}$, $A_{2}=\{4,6\}$,
$A_{3}=\{8,10,12\}$, $\cdots$, then we see that $\mu(A_{n})=n$
and for every $n\in \mathbb{N}$, there exists $k_{n}\in
\mathbb{N}$ such that $A_{n}=\{2k_{n}, 2(k_{n}+1), \cdots,
2(k_{n}+n-1)\}$. Let $\mathcal{A}$ be the $\sigma$-algebra
generated by the partition $A$ of $\mathbb{N}$. Note that,
$\mathcal{A}$ is a sub-$\sigma$-finite algebra of $\Sigma$ and
each of element of $\mathcal{A}$ is an $\mathcal{A}$-atom. It is
known that the conditional expectation of any  $f\in
\mathcal{D}(E)$ relative to $\mathcal{A}$ is
$$E(f)=\sum_{n=1}^{\infty}\left(\frac{1}{\mu(A_n)}\int_{A_n}fd\mu\right)\chi_{A_n}+
\sum_{n=1}^{\infty}f(2n-1)\chi_{\{2n-1\}}.$$
Define $u(n)=n$ and $w(n)=\frac{1}{n^{3}}$, for all $n\in \mathbb{N}$.
For each even number $m\in \mathbb{N}$, there exists $n_{m}\in \mathbb{N}$ such that
$m\in A_{n_{m}}$. Hence for every$ 1< p<\infty$ we have
$$E(|w|^p)(m)=\frac{1}{(2k_{n_m})^{3p}}+\cdots
+\frac{1}{(2k_{n_m}+2n_m-2)^{3p}}$$
and
$$E(|u|^{q})(m)=2^qk^q_{n_m}+2^q(k_{n_m}+1)^q+ \cdots+
2^q(k_{n_m}+n_m-1)^q$$
Since $n_m\leq k_{n_m}$ we have

$$(E(|w|^p))^{\frac{1}{p}}(m)(E(|u|^{q}))^{\frac{1}{q}}(m)\leq\frac{4k_{n_m}}{2^3k^3_{n_m}}.$$

Also, for all $n\in \mathbb{N}$
$$(E(|w|^p))^{\frac{1}{p}}(2n-1)(E(|u|^{p}))^{\frac{1}{p}}(2n-1)\leq
\frac{1}{(2n-1)^2}.$$
Therefore 
\begin{align*}
\sum^{\infty}_{n=1}(E(|w|^{p})(A_n))^{\frac{1}{p}}(E(|u|^{q})(A_n))^{\frac{1}{q}}&+\sum^{\infty}_{n=1}(E(|w|^{p})(2n-1))^{\frac{1}{p}}(E(|u|^{q})(2n-1))^{\frac{1}{q}}\\
&\leq \sum^{\infty}_{n=1}\frac{1}{2k^2_{n}}+\sum^{\infty}_{n=1}\frac{1}{(2n-1)^2}\\
&<\infty,
\end{align*}
and so by Theorem \ref{t2.10} we get that the WCT operator $T=M_wEM_u$ is a nuclear operator on $L^p(\mu)$. Similarly we have
\begin{align*}
\sum^{\infty}_{n=1}E(|w|)(A_n)E(|u|)(A_n)&+\sum^{\infty}_{n=1}E(|w|)(2n-1)E(|u|)(2n-1)\\
&\leq \sum^{\infty}_{n=1}\frac{1}{2k^2_{n}}+\sum^{\infty}_{n=1}\frac{1}{(2n-1)^2}\\
&<\infty,
\end{align*}
and consequently by Theorem \ref{t2.11} we get that we get that the WCT operator $T=M_wEM_u$ is a nuclear operator on $L^1(\mu)$.

\end{exam}

{\bf Declarations}
\begin{itemize}
\item Conflicts of Interest:
The authors declare no conflicts of interest.
\item Author Contributions:
The authors have the same contribution.
\item Data Availability Statement:
Data is not applicable. The results are obtained by manual computations.
\item Funding:
No funding.
\end{itemize}

\end{document}